\newtheorem{thm}{Theorem}
\newtheorem{prop}[thm]{Proposition}
\newtheorem{lem}[thm]{Lemma}
\newtheorem{rem}[thm]{Remark}
\renewcommand{\epsilon}{\varepsilon}
\renewcommand{\phi}{\varphi}
\newcommand{\BB}{\mathbb}
\newcommand{\separate}{\vskip5pt}
\newcommand{\tr}{\operatorname{Tr}}
\newcommand{\HC}{\BB H_{\BB C}}
\newcommand{\HR}{\BB H_{\BB R}}
\newcommand\textcyr[1]{{\fontencoding{OT2}\fontfamily{wncyr}\selectfont #1}}
\newcommand{\Zh}{\textit{\textcyr{Zh}}}
\begin{document}

\title{\bf The Second Order Pole over Split Quaternions}
\author{Matvei Libine}
\maketitle

\begin{abstract}
This is an addition to a series of papers \cite{FL1, FL2, FL3, FL4},
where we develop quaternionic analysis from the point of view of
representation theory of the conformal Lie group and its Lie algebra.
In this paper we develop split quaternionic analogues of certain results
from \cite{FL4}.
Thus we introduce a space of functions ${\cal D}^h \oplus {\cal D}^a$
with a natural action of the Lie algebra
$\mathfrak{gl}(2,\HC) \simeq \mathfrak{sl}(4,\BB C)$,
decompose ${\cal D}^h \oplus {\cal D}^a$ into irreducible components
and find the $\mathfrak{gl}(2,\HC)$-equivariant projectors onto each of these
irreducible components.
\end{abstract}

\section{Introduction}

This is an addition to a series of papers \cite{FL1, FL2, FL3, FL4},
where we develop quaternionic analysis from the point of view of
representation theory of the conformal Lie group and its Lie algebra.
Since the paper \cite{FL4} serves as a starting point for this article,
we highlight the most relevant results.
Let $\BB H$ denote the algebra of quaternions,
$\HC = \BB H \otimes_{\BB R} \BB C$ and
$$
\HC^{\times} = \{ Z \in \HC ;\: \text{$Z$ is invertible}\},
$$
then let
$$
\Zh = \BB C [z_{11},z_{12},z_{21},z_{22},N(Z)^{-1}]
$$
be the space of polynomial functions on $\HC^{\times}$.
We can think of $\Zh$ as a quaternionic analogue of Laurent polynomials
$\BB C[z,z^{-1}]$.
There is a natural action of the Lie algebra
$\mathfrak{gl}(2,\HC) \simeq \mathfrak{sl}(4,\BB C)$ on $\Zh$ denoted by
$\rho_1$ (see Lemma \ref{rho-algebra-action}).
According to Theorem 7 in \cite{FL4} (restated here as Theorem \ref{thm7}),
$(\rho_1,\Zh)$ decomposes into three irreducible components:
$$
\Zh = \Zh^- \oplus \Zh^0 \oplus \Zh^+.
$$
This decomposition is followed by a derivation of projectors of $\Zh$ onto
its irreducible components $\Zh^-$, $\Zh^0$ and $\Zh^+$.
These projectors are regarded as quaternionic analogues of
the Cauchy's formula for the second order pole
$$
f'(z_0) = \frac 1{2\pi i} \oint \frac {f(z)\,dz}{(z-z_0)^2}.
$$
The cases of $\Zh^-$ and $\Zh^+$ are fairly straightforward.
On the other hand, the case of $\Zh^0$ is more subtle and requires a certain
regularization of infinities which is well known in four-dimensional quantum
field theory as ``scalar vacuum polarization''.
This vacuum polarization is usually encoded by a Feynman diagram shown in
Figure \ref{vacuum} and plays a key role in renormalization theory
(see, for example, \cite{Sm}).

\begin{figure}
\begin{center}
\includegraphics[scale=1]{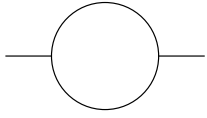}
\end{center}
\caption{Scalar vacuum polarization diagram}
\label{vacuum}
\end{figure}

Corollary 6 in \cite{FL4} (restated here as Proposition \ref{cor6})
asserts that
$$
\Zh = \BB C \text{-span of }
\bigl\{ t^l_{n\,\underline{m}}(Z) \cdot N(Z)^k;\: k \in \BB Z \bigr\},
$$
where $t^l_{n\,\underline{m}}(Z)$'s are the matrix coefficients of $SU(2)$.
In this article we replace the matrix coefficients of $SU(2)$ with the
matrix coefficients of the discrete series representations of $SU(1,1)$
-- denoted by $\tau^l_{n\,\underline{m}}(Z)$ -- and consider a space
\begin{equation}  \label{split-Zh}
{\cal D}^h \oplus {\cal D}^a = \BB C \text{-span of }
\bigl\{ \tau^l_{n\,\underline{m}}(Z) \cdot N(Z)^k;\: k \in \BB Z \bigr\}.
\end{equation}
(We denote by ${\cal D}^h$ and ${\cal D}^a$ the subspaces of (\ref{split-Zh})
spanned by $\tau^l_{n\,\underline{m}}(Z) \cdot N(Z)^k$, where
$\tau^l_{n\,\underline{m}}(Z)$'s are the matrix coefficients of holomorphic and
antiholomorphic discrete series respectively.)
We regard the space (\ref{split-Zh}) as a split quaternionic analogue of $\Zh$.
Our first result is Theorem \ref{decomposition-thm}, which shows that this
space is invariant under the $\rho_1$-action of $\mathfrak{gl}(2,\HC)$ and
decomposes into six irreducible components that we denote by
${\cal D}^h_<$, ${\cal D}^{--}$, ${\cal D}^h_>$, ${\cal D}^a_<$,
${\cal D}^{++}$ and ${\cal D}^a_>$.
Then in Proposition \ref{kernel-prop} we find the reproducing kernels for
each of these irreducible components.
Finally, in Theorem \ref{projector-thm} we obtain the
$\mathfrak{gl}(2,\HC)$-equivariant projectors onto the irreducible components
of the space (\ref{split-Zh}). This result can be regarded as a
split analogue of Corollary 14 and Theorem 15 in \cite{FL4}.
Perhaps surprisingly, unlike the case of $\Zh^0$, there are absolutely no
convergence issues for the reproducing kernels or the integral operators
realizing the projectors onto the irreducible components.

\section{Preliminaries}  \label{preliminaries-section}

Let $\HC$ denote the space of complexified quaternions:
$\HC = \BB H \otimes \BB C$, it can be identified with the algebra of
$2 \times 2$ complex matrices:
$$
\HC = \BB H \otimes \BB C \simeq \biggl\{
Z= \begin{pmatrix} z_{11} & z_{12} \\ z_{21} & z_{22} \end{pmatrix}
; \: z_{ij} \in \BB C \biggr\}
= \biggl\{ Z= \begin{pmatrix} z^0-iz^3 & -iz^1-z^2 \\ -iz^1+z^2 & z^0+iz^3
\end{pmatrix} ; \: z^k \in \BB C \biggr\}.
$$
For $Z \in \HC$, we write
$$
N(Z) = \det \begin{pmatrix} z_{11} & z_{12} \\ z_{21} & z_{22} \end{pmatrix}
= z_{11}z_{22}-z_{12}z_{21} = (z^0)^2 + (z^1)^2 + (z^2)^2 + (z^3)^2
$$
and think of it as the norm of $Z$.
Let
$$
\HC^{\times} = \{ Z \in \HC ;\: N(Z) \ne 0 \}
$$
be the group of invertible complexified  quaternions,
$\HC^{\times} \simeq GL(2,\BB C)$.

Let $\widetilde{\Zh}$ denote the space of $\BB C$-valued functions on $\HC$
(possibly with singularities) which are holomorphic with respect to the
complex variables $z_{11},z_{12},z_{21},z_{22}$.
We recall the action of $GL(2,\HC)$ on $\widetilde{\Zh}$ given by
equation (49) in \cite{FL1}:
\begin{multline*}
\rho_1(h): \: f(Z) \quad \mapsto \quad \bigl( \rho_1(h)f \bigr)(Z) =
\frac {f \bigl( (aZ+b)(cZ+d)^{-1} \bigr)}{N(cZ+d) \cdot N(a'-Zc')},  \\
h = \begin{pmatrix} a' & b' \\ c' & d' \end{pmatrix},\:
h^{-1} = \begin{pmatrix} a & b \\ c & d \end{pmatrix} \in GL(2, \HC).
\end{multline*}
Differentiating this action, we obtain an action of the Lie algebra
$\mathfrak{gl}(2,\HC) \simeq \mathfrak{gl}(4,\BB C)$,
which we still denote by $\rho_1$.

\begin{lem}[Lemma 68 from \cite{FL1}]  \label{rho-algebra-action}
Let $\partial = \bigl(\begin{smallmatrix} \partial_{11} & \partial_{21} \\
\partial_{12} & \partial_{22} \end{smallmatrix}\bigr)$, where
$\partial_{ij} = \frac{\partial}{\partial z_{ij}}$.
The Lie algebra action $\rho_1$ of $\mathfrak{gl}(2,\HC)$ on $\widetilde{\Zh}$
is given by
\begin{align*}
\rho_1 \begin{pmatrix} A & 0 \\ 0 & 0 \end{pmatrix} &:
f \mapsto \tr \bigl( A \cdot (-Z \cdot \partial f - f) \bigr)  \\
\rho_1 \begin{pmatrix} 0 & B \\ 0 & 0 \end{pmatrix} &:
f \mapsto \tr \bigl( B \cdot (-\partial f ) \bigr)  \\
\rho_1 \begin{pmatrix} 0 & 0 \\ C & 0 \end{pmatrix} &:
f \mapsto \tr \Bigl( C \cdot \bigl(
Z \cdot (\partial f) \cdot Z +2Zf \bigr) \Bigr)
= \tr \Bigl( C \cdot \bigl(Z \cdot \partial (Zf) \bigr) \Bigr)  \\
\rho_1 \begin{pmatrix} 0 & 0 \\ 0 & D \end{pmatrix} &:
f \mapsto \tr \Bigl( D \cdot \bigl( (\partial f) \cdot Z + f \bigr) \Bigr)
= \tr \Bigl( D \cdot \bigl( \partial (Zf) - f \bigr) \Bigr).
\end{align*}
\end{lem}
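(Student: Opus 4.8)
The plan is to compute $\rho_1(X) = \frac{d}{dt}\big|_{t=0}\,\rho_1(\exp(tX))$ directly from the group-action formula for $\rho_1(h)$ recalled above, using that $\rho_1$ is linear in $X \in \mathfrak{gl}(2,\HC)$; thus it suffices to treat the four one-parameter subgroups generated by $X = \bigl(\begin{smallmatrix} A & 0 \\ 0 & 0\end{smallmatrix}\bigr)$, $\bigl(\begin{smallmatrix} 0 & B \\ 0 & 0\end{smallmatrix}\bigr)$, $\bigl(\begin{smallmatrix} 0 & 0 \\ C & 0\end{smallmatrix}\bigr)$, $\bigl(\begin{smallmatrix} 0 & 0 \\ 0 & D\end{smallmatrix}\bigr)$ separately and then add the four results. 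For $h(t) = \exp(tX)$ we have $h(t)^{-1} = \exp(-tX)$, so the blocks $a,b,c,d$ appearing in the formula are read off from $-tX$ to first order, while the entries $a',c'$ in the factor $N(a'-Zc')$ come from $+tX$. The observation that streamlines the differentiation is that, with the transposed arrangement $\partial = \bigl(\begin{smallmatrix} \partial_{11} & \partial_{21} \\ \partial_{12} & \partial_{22}\end{smallmatrix}\bigr)$, the chain rule takes the compact form $\frac{d}{dt}\big|_{0} f(Z + tM) = \tr\bigl((\partial f)\,M\bigr)$ for any constant matrix $M$; together with the cyclic invariance of the trace this delivers each answer already packaged inside a single $\tr(\,\cdot\,)$.

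I would dispatch the three blocks $A$, $B$, $D$ first, since there the M\"obius argument $(aZ+b)(cZ+d)^{-1}$ stays affine in $Z$ and at most one norm factor is nontrivial. For the $A$-block, $\exp(-tX) = \bigl(\begin{smallmatrix} e^{-tA} & 0 \\ 0 & 1\end{smallmatrix}\bigr)$ yields argument $e^{-tA}Z$ and denominator $N(e^{tA}) = e^{t\tr A}$, so $\rho_1(X)f = \tr\bigl((\partial f)(-AZ)\bigr) - (\tr A)f = \tr\bigl(A(-Z\,\partial f - f)\bigr)$. The $B$-block gives argument $Z - tB$ with both norm factors equal to $1$, hence $\tr\bigl(B(-\partial f)\bigr)$. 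The $D$-block gives argument $Z\,e^{tD}$ and denominator $N(e^{-tD}) = e^{-t\tr D}$, hence $\tr\bigl((\partial f)ZD\bigr) + (\tr D)f = \tr\bigl(D((\partial f)Z + f)\bigr)$. Each is a one-line computation once the relevant exponential is expanded to first order in $t$.

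The main obstacle is the $C$-block, the only case in which the denominator $cZ+d = 1 - tCZ$ must genuinely be inverted and in which \emph{both} norm factors vary. Expanding $(1-tCZ)^{-1} = 1 + tCZ + O(t^2)$ shows the M\"obius argument equals $Z + tZCZ + O(t^2)$, so the chain-rule term is $\tr\bigl((\partial f)\,ZCZ\bigr)$, while $N(1-tCZ)^{-1}\,N(1-tZC)^{-1}$ contributes $\bigl(\tr(CZ) + \tr(ZC)\bigr)f = 2\tr(CZ)\,f$; this is where the factor $2$ originates. A cyclic rearrangement then gives $\tr\bigl(C(Z(\partial f)Z + 2Zf)\bigr)$, as claimed. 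Finally, the two alternative ``$\partial(Zf)$'' forms in the $C$- and $D$-lines follow from the single Leibniz identity $\partial(Zf) = (\partial f)Z + 2f$ (with $f$ read as $f$ times the identity), which reconciles $\tr\bigl(C\,Z\,\partial(Zf)\bigr)$ with $\tr\bigl(C(Z(\partial f)Z + 2Zf)\bigr)$ and $\tr\bigl(D(\partial(Zf) - f)\bigr)$ with $\tr\bigl(D((\partial f)Z + f)\bigr)$. I expect no difficulty beyond keeping the index conventions for $\partial$ straight --- which is precisely what the $C$-block exercises.
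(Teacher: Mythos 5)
Your computation is correct and is exactly the argument the paper intends: it states the lemma by citation to [FL1] and obtains it by ``differentiating this action,'' which is precisely your expansion of the four one-parameter subgroups to first order in $t$. All four block computations, the chain-rule identity $\frac{d}{dt}\big|_{0}f(Z+tM)=\tr\bigl((\partial f)M\bigr)$ for the transposed $\partial$, and the Leibniz identity $\partial(Zf)=(\partial f)Z+2f$ check out.
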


This lemma implies that $\mathfrak{gl}(2, \HC)$ preserves the spaces
\begin{align*}
\Zh^+ &= \{\text{polynomial functions on $\HC$}\}
= \BB C[z_{11},z_{12},z_{21},z_{22}]
\qquad \qquad \text{and} \\
\Zh &= \{\text{polynomial functions on $\HC^{\times}$}\}
= \BB C[z_{11},z_{12},z_{21},z_{22},N(Z)^{-1}].
\end{align*}

Recall that $t^l_{n\,\underline{m}}(Z)$'s denote the matrix coefficients of $SU(2)$
described by equation (27) of \cite{FL1} (cf. \cite{V}).
These are polynomial functions on $\HC$.
The parameters range as follows:
$$
l=0,\frac 12, 1, \frac 32,\dots, \qquad
m,n= -l, -l+1, -l+2, \dots, l.
$$
We recall some results from \cite{FL4}.

\begin{prop}[Corollary 6 in \cite{FL4}]  \label{cor6}
The functions 
$$
t^l_{n\,\underline{m}}(Z) \cdot N(Z)^k, \qquad
l=0, \frac12, 1, \frac32, \dots, \quad m,n=-l,-l+1,\dots,l, \quad k \in\BB Z,
$$
form a vector space basis of $\Zh = \BB C[z_{11},z_{12},z_{21},z_{22},N(Z)^{-1}]$.
\end{prop}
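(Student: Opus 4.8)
The plan is to reduce everything to the polynomial ring $\Zh^+ = \BB C[z_{11},z_{12},z_{21},z_{22}]$ and then to decompose it under the two-sided $GL(2,\BB C)$-action. First I would record that $t^l_{n\,\underline{m}}(Z)$ is homogeneous of degree $2l$ and $N(Z)$ is homogeneous of degree $2$, so each product $t^l_{n\,\underline{m}}(Z)\cdot N(Z)^k$ is homogeneous of degree $2l+2k$. Since $N(Z)$ is irreducible, it is a non-zero-divisor in $\Zh^+$ and a unit in $\Zh$; hence multiplication by a suitable power $N(Z)^j$ turns any finite linear relation among the $t^l_{n\,\underline{m}}(Z)N(Z)^k$ (with $k\in\BB Z$) into one with all exponents $\ge 0$, and turns any element $P/N(Z)^j$ of $\Zh$ into a $\BB C$-combination of such products with $k\in\BB Z$. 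Thus both linear independence and spanning in $\Zh$ follow from the corresponding statements for $\{t^l_{n\,\underline{m}}(Z)N(Z)^k:\ l,k\ge 0\}$ inside $\Zh^+$.

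For linear independence --- which I expect to be the crux --- I would exploit the action of $GL(2,\BB C)\times GL(2,\BB C)$ on $\Zh^+=\BB C[M_2(\BB C)]$ by $Z\mapsto gZh^{-1}$. Under this action $N(Z)^k$ spans the one-dimensional representation $\det^{k}\boxtimes\det^{-k}$, while the span of $\{t^l_{n\,\underline{m}}(Z)\}_{m,n}$ is the matrix-coefficient block of $\operatorname{Sym}^{2l}$, isomorphic to $\operatorname{Sym}^{2l}\boxtimes(\operatorname{Sym}^{2l})^{*}$. Consequently the span of $\{t^l_{n\,\underline{m}}(Z)N(Z)^{k}\}_{m,n}$ is an irreducible block isomorphic to $W_{l,k}\boxtimes W_{l,k}^{*}$ with $W_{l,k}=\operatorname{Sym}^{2l}\otimes\det^{k}$. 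The point is that the map $(l,k)\mapsto(2l+k,k)$ sends distinct pairs to distinct dominant highest weights, so these blocks are pairwise non-isomorphic and therefore sit in $\Zh^+$ in direct sum; within a single block the $(2l+1)^{2}$ functions $t^l_{n\,\underline{m}}(Z)$ are linearly independent by Schur orthogonality. This separation is exactly what the coarse grading by total degree cannot see: several pairs $(l,k)$ share the degree $2l+2k$, and it is only the finer bookkeeping by highest weight that disentangles them.

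It then remains to prove spanning, for which a dimension count suffices once independence is known. Restricting to homogeneous degree $d$, the candidate functions are those with $2l+2k=d$ and $l,k\ge 0$, and their number is $\sum_{2l+2k=d}(2l+1)^{2}$; a short computation shows this equals $\binom{d+3}{3}=\dim(\Zh^+)_{d}$ for every $d\ge 0$ (treating the even and odd cases separately). Since a linearly independent family of the correct cardinality in each finite-dimensional graded piece is automatically a basis, the products $t^l_{n\,\underline{m}}(Z)N(Z)^{k}$ span $\Zh^+$, and hence $\Zh$ after inverting $N(Z)$. I would note that this whole argument is the algebraic Peter--Weyl theorem for $GL(2,\BB C)$ in disguise --- $\Zh$ is precisely the coordinate ring $\BB C[GL(2,\BB C)]$ of the principal open set $\{N(Z)\ne 0\}\subset M_2(\BB C)$ --- so one could alternatively invoke that theorem directly, identifying the irreducible rational representations of $GL(2,\BB C)$ with the $W_{l,k}$.
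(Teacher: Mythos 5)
Your argument is correct. Note, however, that the paper itself offers no proof of this proposition --- it is quoted verbatim from \cite{FL4}, where it is obtained by a different route: there one first shows that the $t^l_{n\,\underline{m}}(Z)$'s, for $l$ fixed, form a basis of the space of harmonic ($\square f=0$) polynomials homogeneous of degree $2l$, and then invokes the classical separation-of-variables decomposition $\BB C[z_{11},z_{12},z_{21},z_{22}]=\bigoplus_{l,k}\mathcal{H}_{2l}\cdot N(Z)^k$ for the Laplacian $\square=\det\partial$. Your proof replaces harmonicity entirely by the two-sided $GL(2,\BB C)\times GL(2,\BB C)$ action: the reduction to $\Zh^+$ by clearing denominators is sound (injectivity of multiplication by $N(Z)$ needs only that $\Zh^+$ is a domain, not irreducibility of $N$), the highest-weight bookkeeping $(l,k)\mapsto(2l+k,k)$ correctly separates the blocks $W_{l,k}\boxtimes W_{l,k}^{*}$, and the dimension count $\sum_{2l+2k=d}(2l+1)^2=\binom{d+3}{3}$ checks out in both parities. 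This is, as you say, the algebraic Peter--Weyl theorem for $GL(2,\BB C)$; what it buys is independence from the explicit form of the $t^l_{n\,\underline{m}}$'s beyond their being the matrix coefficients of $\operatorname{Sym}^{2l}$, at the cost of not exhibiting the link to harmonic polynomials that the rest of the paper (e.g.\ the proof of Theorem \ref{decomposition-thm}) actually relies on. Both decompositions are of course the same: $\mathcal{H}_{2l}$ is precisely the $\operatorname{Sym}^{2l}\boxtimes(\operatorname{Sym}^{2l})^{*}$-isotypic block.
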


\begin{thm}[Theorem 7 in \cite{FL4}]  \label{thm7}
The representation $(\rho_1,\Zh)$ of $\mathfrak{gl}(2,\HC)$ has the
following decomposition into irreducible components:
$$
(\rho_1,\Zh) = (\rho_1,\Zh^-) \oplus (\rho_1,\Zh^0) \oplus (\rho_1,\Zh^+),
$$
where
\begin{align*}
\Zh^+ &= \BB C \text{-span of }
\bigl\{ t^l_{n\,\underline{m}}(Z) \cdot N(Z)^k;\: k \ge 0 \bigr\}, \\
\Zh^- &= \BB C \text{-span of }
\bigl\{ t^l_{n\,\underline{m}}(Z) \cdot N(Z)^k;\: k \le -(2l+2) \bigr\}, \\
\Zh^0 &= \BB C \text{-span of }
\bigl\{ t^l_{n\,\underline{m}}(Z) \cdot N(Z)^k;\: -(2l+1) \le k \le -1 \bigr\}
\end{align*}
(see Figure \ref{decomposition-fig}).
\end{thm}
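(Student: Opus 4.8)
The plan is to combine the already-known invariance of the polynomial subspace $\Zh^+$ with a cell-by-cell analysis of the $\rho_1$-action, using the conformal inversion to reduce the genuine computations to a single identity. First I would organize $\Zh$ by its cells: for fixed $l$ and $k$ set $W_{l,k}=\Span\bigl\{t^l_{n\,\underline m}(Z)\,N(Z)^k\bigr\}$, so that by Proposition \ref{cor6} these are independent and $\Zh=\bigoplus_{l,k}W_{l,k}$. The diagonal blocks in Lemma \ref{rho-algebra-action} (the $A$- and $D$-entries) generate the infinitesimal left and right regular action of $\mathfrak{gl}(2)\oplus\mathfrak{gl}(2)$, under which each $W_{l,k}$ is the irreducible module $V_l\boxtimes V_l$ and is preserved. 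Taking $A=D=\mathrm{Id}$, one computes that the two central generators act on $W_{l,k}$ by the scalar $\pm 2(l+k+1)$; since the $\mathfrak{gl}(2)\oplus\mathfrak{gl}(2)$-isotype recovers $l$ and this scalar then recovers $k$, every $\rho_1\bigl(\mathfrak{gl}(2,\HC)\bigr)$-invariant subspace, being graded and $\mathfrak{gl}(2)\oplus\mathfrak{gl}(2)$-stable, is a direct sum of cells. This reduces the whole theorem to understanding how the off-diagonal blocks move between cells.

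Next I would exploit the contact grading $\mathfrak{gl}(2,\HC)=\mathfrak{b}\oplus\mathfrak{k}\oplus\mathfrak{c}$ coming from the block decomposition. The $B$-block acts through $-\partial$ and lowers the homogeneity degree by one, while the $C$-block acts through $Z\,\partial\,Z$ and raises it by one; since $\deg\bigl(t^l_{n\,\underline m}N^k\bigr)=2l+2k$ and $\mathfrak b\cong\mathfrak c\cong V_{1/2}\boxtimes V_{1/2}$ as $\mathfrak{k}$-modules, the Clebsch--Gordan rule $V_{1/2}\otimes V_l=V_{l+1/2}\oplus V_{l-1/2}$ forces the only possible transitions to be
\[
\mathfrak b:\ W_{l,k}\to W_{l-1/2,\,k}\oplus W_{l+1/2,\,k-1},\qquad
\mathfrak c:\ W_{l,k}\to W_{l+1/2,\,k}\oplus W_{l-1/2,\,k+1}.
\]
Each component map is $\mathfrak{k}$-equivariant between irreducible $\mathfrak{k}$-modules, so by Schur's lemma it is either zero or an isomorphism; thus every assertion below reduces to deciding the (non)vanishing of a single scalar, checkable on one highest-weight coefficient such as $t^l_{l\,\underline l}N^k$.

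For invariance I would match the four arrow-endpoints against the three $k$-ranges and find that the decomposition is respected precisely when four boundary arrows vanish: the $\mathfrak b$-arrow out of $\{k=0\}$, the $\mathfrak c$-arrow out of $\{k=-1\}$, the $\mathfrak b$-arrow out of $\{k=-2l-1\}$, and the $\mathfrak c$-arrow out of $\{k=-2l-2\}$. The first is automatic since $-\partial$ carries polynomials to polynomials. The economical device is the group formula for $\rho_1(h)$ applied to the conformal inversion $h$ (with $Z\mapsto Z^{-1}$): it yields an intertwining operator that interchanges $\mathfrak b$ and $\mathfrak c$ and acts on cells by $(l,k)\mapsto(l,-2l-k-2)$, so it swaps the first and fourth boundary arrows and swaps the second and third. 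Hence only one genuine identity remains, say that $\rho_1$ of the $C$-block applied to $t^l_{n\,\underline m}N^{-1}$ has no $W_{l-1/2,\,0}$ (polynomial) component. \emph{This single vanishing is the crux of the proof.} I would verify it by writing the $C$-block action as $\tr\bigl(C\cdot Z\,\partial(Z\,t^l_{n\,\underline m}N^{-1})\bigr)$, expanding $\partial N^{-1}=-N^{-2}\,\partial N$, and checking on the highest-weight coefficient that the would-be $N^{0}$-term cancels, using the explicit form of $t^l_{n\,\underline m}$ from \cite{FL1}. The main obstacle is exactly this off-diagonal matrix-element computation, which the inversion symmetry has reduced to a single case.

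Finally, irreducibility follows from connectivity. Because any invariant subspace is a sum of cells, it suffices to show that inside each of $\Zh^+,\Zh^0,\Zh^-$ every cell is joined to a distinguished cell by a chain of nonzero interior arrows, and nonvanishing of an interior arrow is again a one-vector check (differentiation and multiplication by $Z$ are manifestly nonzero on matrix coefficients). Descending by $\mathfrak b$ and, along the lower boundary of $\Zh^0$, by the $\mathfrak c$-arrow $W_{l,-2l-1}\to W_{l-1/2,\,-2l}$, one reaches the extreme cells $W_{0,0}=\BB C\cdot 1$, $W_{0,-1}=\BB C\cdot N^{-1}$ and $W_{0,-2}=\BB C\cdot N^{-2}$ respectively; each is one-dimensional and regenerates its component by raising, so the three pieces are irreducible, completing the decomposition.
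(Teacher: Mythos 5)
Your proposal is correct, and its skeleton --- the cells $W_{l,k}$, the two-component transition arrows for the $B$- and $C$-blocks, the four boundary vanishings, and the connectivity argument for irreducibility --- is precisely the structure of the paper's argument (it is the arrow diagram of Figure \ref{actions} together with Figure \ref{decomposition-fig}; note the paper only cites Theorem 7 from \cite{FL4} and outlines the analogous proof of Theorem \ref{decomposition-thm}). Where you genuinely diverge is in how the arrows and their vanishing are established. The paper relies on the explicit identities (27) and (28) of \cite{FL4}, which exhibit the two harmonic components of $\partial\bigl(f_l N^k\bigr)$ and of $Z\cdot\partial\bigl(f_l N^k\bigr)\cdot Z+2Zf_lN^k$ together with the scalars $\tfrac{2l+k+1}{2l+1},\tfrac{k}{2l+1}$ and $\tfrac{2l+k+2}{2l+1},\tfrac{k+1}{2l+1}$, from which every boundary vanishing and every interior nonvanishing is read off at once. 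You instead deduce the shape of the arrows from Clebsch--Gordan plus homogeneity, reduce each component to a scalar via Schur's lemma, and use the inversion intertwiner $(l,k)\mapsto(l,-2l-k-2)$ to cut the four boundary checks down to a single one; this is a legitimate, softer route (the paper itself invokes $\operatorname{Inv}$ in exactly this spirit), at the cost of not producing the explicit coefficients. One point to execute carefully in your one remaining computation: the literal highest-weight tensor $C_{\mathrm{hw}}\otimes t^l_{l\,\underline{l}}N^{-1}$ lies entirely in the $V_{l+1/2}\boxtimes V_{l+1/2}$ isotype of $\mathfrak{c}\otimes W_{l,-1}$, so testing it would make the vanishing of the $W_{l-1/2,\,0}$ component automatic and prove nothing; you must apply a $C$ whose tensor with your test coefficient has nonzero projection onto the $V_{l-1/2}\boxtimes V_{l-1/2}$ isotype (e.g.\ the lowering element of $\mathfrak{c}$ against $t^l_{l\,\underline{l}}N^{-1}$) before Schur's lemma lets you conclude. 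With that adjustment the argument goes through.
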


\begin{figure}
\begin{center}
\setlength{\unitlength}{.7mm}
\begin{picture}(110,70)
\multiput(10,10)(10,0){10}{\circle*{1}}
\multiput(10,20)(10,0){10}{\circle*{1}}
\multiput(10,30)(10,0){10}{\circle*{1}}
\multiput(10,40)(10,0){10}{\circle*{1}}
\multiput(10,50)(10,0){10}{\circle*{1}}
\multiput(10,60)(10,0){10}{\circle*{1}}
\thicklines
\put(60,0){\vector(0,1){70}}
\put(0,10){\vector(1,0){110}}
\thinlines
\put(58,10){\line(0,1){55}}
\put(60,8){\line(1,0){45}}
\put(52,10){\line(0,1){55}}
\put(5,8){\line(1,0){35}}
\put(41.4,11.4){\line(-1,1){36.4}}
\put(48.6,8.6){\line(-1,1){43.6}}
\qbezier(58,10)(58,8)(60,8)
\qbezier(40,8)(43.8,8)(41.4,11.4)
\qbezier(52,10)(52,5.2)(48.6,8.6)
\put(62,67){$2l$}
\put(107,12){$k$}
\put(3,23){\small $\Zh^-$}
\put(23,63){\small $\Zh^0$}
\put(102,43){\small $\Zh^+$}
\end{picture}
\end{center}
\caption{Decomposition of $(\rho_1,\Zh)$ into irreducible components}
\label{decomposition-fig}
\end{figure}
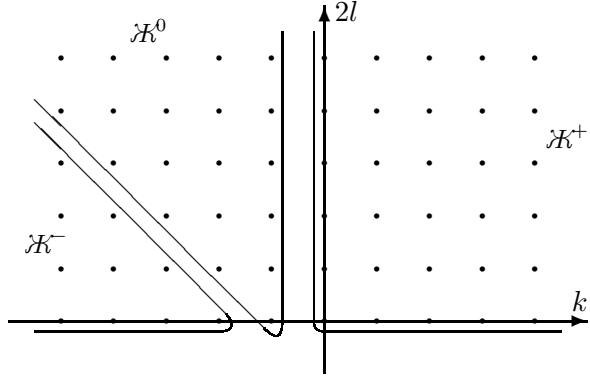

\section{Split Quaternions}
\label{split-section}

In this section we essentially replace the classical quaternions $\BB H$ with
split quaternions $\HR$ by replacing the matrix coefficients
$t^l_{m \, \underline{n}}(Z)$ of $SU(2)$ with the matrix coefficients of $SU(1,1)$,
which we denote by $\tau^l_{m \, \underline{n}}(Z)$ in order to avoid conflict
of notations.

We are primarily interested in a real form of $\HC$ called split quaternions:
$$
\HR = \biggl\{
Z= \begin{pmatrix} z_{11} & z_{12} \\ z_{21} & z_{22} \end{pmatrix} \in \HC
; \: z_{22} = \overline{z_{11}}, \: z_{21} = \overline{z_{12}} \biggr\}.
$$
This algebra is studied in detail in \cite{FL2}.
The ``unit sphere'' in $\HR$ is a hyperboloid
$$
SU(1,1) = \{X \in \HR ;\: N(X)=1 \},
$$
which is a Lie group isomorphic to $SL(2,\BB R)$.
Another important group sitting inside $\HC$ is
$$
U(1,1) = \{ Z=e^{i\theta} \cdot X \in \HC;\:
X \in SU(1,1) \subset \HR,\: \theta \in \BB R \}.
$$
We orient $U(1,1)$ so that $\{\tilde e_0, \tilde e_1, \tilde e_2, e_3\}$
is a positive basis of the tangent space at $1 \in U(1,1)$
(same orientation as in \cite{FL2}), where
$$
\tilde e_0 = \begin{pmatrix} -i & 0 \\ 0 & -i \end{pmatrix}, \quad
\tilde e_1 = \begin{pmatrix} 0 & 1 \\ 1 & 0 \end{pmatrix}, \quad
\tilde e_2 = \begin{pmatrix} 0 & i \\ -i & 0 \end{pmatrix}, \quad
e_3 = \begin{pmatrix} -i & 0 \\ 0 & i \end{pmatrix}.
$$
We regard groups $U(1,1) \times U(1,1)$ and $SU(1,1) \times SU(1,1)$
as subgroups of $GL(2,\HC)$:
\begin{equation}  \label{U(1,1)xU(1,1)}
U(1,1) \times U(1,1) = \biggl\{
\begin{pmatrix} a & 0 \\ 0 & d \end{pmatrix} \in GL(2, \HC);\:
a,d \in U(1,1) \subset \HC \biggr\}.
\end{equation}
\begin{equation}  \label{SU(1,1)xSU(1,1)}
SU(1,1) \times SU(1,1) = \biggl\{
\begin{pmatrix} a & 0 \\ 0 & d \end{pmatrix} \in GL(2, \HC);\:
a,d \in \HR, \: N(a)=N(d)=1 \biggr\}.
\end{equation}

The matrix coefficients of the discrete series representations of $SU(1,1)$
are rational functions on $\HC$ described by equation (13) of \cite{FL2}
(cf. \cite{V}):
\begin{equation}  \label{int_t}
\tau^l_{n\,\underline{m}}(Z) = \frac 1{2\pi i}
\oint (sz_{11}+z_{21})^{l-m} (sz_{12}+z_{22})^{l+m} s^{-l+n} \frac {ds}s,
\qquad Z = \begin{pmatrix} z_{11} & z_{12} \\ z_{21} & z_{22} \end{pmatrix},
\end{equation}
where the integral is taken over the unit circle $\{ s\in \BB C ;\: |s|=1 \}$
traversed once in the counterclockwise direction.
The parameters $l$, $m$ and $n$ range over
$$
l = -1, -\frac 32, -2, -\frac 52, \dots,
\qquad m,n \in \BB Z + l, \qquad m, n \le l \:\: \text{or} \:\: m, n \ge -l.
$$
When $m, n \ge -l$, we get the matrix coefficients of the holomorphic
discrete series representations.
And when $m, n \le l$, we get the matrix coefficients of the antiholomorphic
discrete series representations.
While the integral expressions for  $t^l_{n\,\underline{m}}(Z)$'s and
$\tau^l_{n\,\underline{m}}(Z)$'s are exactly the same, the indices $l$, $m$ and $n$
have different ranges, and we insist on using different notations, since
the former are matrix coefficients of $SU(2)$ and the latter are matrix
coefficients of $SU(1,1)$.

We define split analogues of the space $\Zh$:
\begin{align*}
{\cal D}^h &= \BB C \text{-span of } \bigl\{
\tau^l_{n\,\underline{m}}(Z) \cdot N(Z)^k;\: m, n \ge -l, \: k \in \BB Z \bigr\}, \\
{\cal D}^a &= \BB C \text{-span of } \bigl\{
\tau^l_{n\,\underline{m}}(Z) \cdot N(Z)^k;\: m, n \le l, \: k \in \BB Z \bigr\}
\end{align*}
($h$ stands for ``holomorphic'' and $a$ stands for ``antiholomorphic'').

As in Subsection 3.6 in \cite{FL2}, we define a $\mathfrak{gl}(2,\HC)$-invariant
symmetric bilinear pairing for functions on ${\cal D}^h \oplus {\cal D}^a$
\begin{equation}  \label{pairing}
\langle f_1,f_2 \rangle_1 = \frac i{2\pi^3} \int_{U(1,1)} f_1(Z) \cdot f_2(Z) \,dV,
\end{equation}
where $dV$ is a holomorphic 4-form on $\HC$
$$
dV = dz^0 \wedge dz^1 \wedge dz^2 \wedge dz^3
= \frac14 dz_{11} \wedge dz_{12} \wedge dz_{21} \wedge dz_{22}.
$$

\begin{prop}[Proposition 56 in \cite{FL2}]
We have the following orthogonality relations:
\begin{equation}  \label{orthogonality}
\bigl\langle \tau^{l'}_{n'\,\underline{m'}}(Z) \cdot N(Z)^{k'},
\tau^l_{m\underline{n}}(Z^{-1}) \cdot N(Z)^{-k-2} \bigr\rangle_1
= - \frac1{2l+1} \delta_{kk'}\delta_{ll'} \delta_{mm'} \delta_{nn'},
\end{equation}
where the indices $k,l,m,n$ are
$l = -1, -\frac 32, -2, \dots$, $m,n \in \BB Z +l$, $m,n \ge -l$ or $m,n \le l$,
$k \in \BB Z$ and similarly for $k',l',m',n'$.
In particular, the functions $\tau^l_{n\,\underline{m}}(Z)\cdot N(Z)^k$'s
are linearly independent.
\end{prop}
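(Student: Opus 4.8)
The plan is to exploit the fibration $U(1,1) = U(1)\cdot SU(1,1)$ and to split the integral defining $\langle\,\cdot\,,\,\cdot\,\rangle_1$ into an abelian part over $U(1)$, which will produce the Kronecker deltas in $k$ and $l$, and a semisimple part over $SU(1,1)$, which is a Schur orthogonality relation producing the deltas in $m,n$ together with the constant. Concretely, I would parametrize $Z = e^{i\theta}X$ with $X\in SU(1,1)$ and $\theta\in\BB R$, so that $N(Z) = e^{2i\theta}$ and $Z^{-1} = e^{-i\theta}X^{-1}$. Since $\tau^l_{n\,\underline{m}}$ is homogeneous of degree $2l$ by (\ref{int_t}) and $N(Z)^k$ is homogeneous of degree $2k$, the product $f_1(Z)f_2(Z)$ factors as $e^{2i(l'+k'-l-k-2)\theta}$ times $\tau^{l'}_{n'\,\underline{m'}}(X)\,\tau^l_{m\,\underline{n}}(X^{-1})$. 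I would then show that the restriction of $dV$ to $U(1,1)$ decomposes as $C\,e^{4i\theta}\,d\theta\wedge d\mu(X)$, where $d\mu$ is Haar measure on $SU(1,1)$; this is forced by the fact that the holomorphic scaling $Z\mapsto e^{i\theta_0}Z$ multiplies the holomorphic $4$-form $dV$ by $e^{4i\theta_0}$ while preserving $U(1,1)$.

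Combining these, the $\theta$-dependence of the full integrand becomes $e^{2i(l'+k'-l-k)\theta}$, so the $\theta$-integral vanishes unless $l'+k' = l+k$. The remaining factor is $\int_{SU(1,1)}\tau^{l'}_{n'\,\underline{m'}}(X)\,\tau^l_{m\,\underline{n}}(X^{-1})\,d\mu(X)$, and here I would invoke the Schur orthogonality relations for the square-integrable discrete series matrix coefficients of $SU(1,1)$, in the form $\int \pi^{l'}_{n'm'}(X)\,\pi^l_{mn}(X^{-1})\,d\mu(X) = d_l^{-1}\,\delta_{ll'}\delta_{n'n}\delta_{m'm}$, where $d_l$ is the formal degree. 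The appearance of $X^{-1}$ rather than a complex conjugate is exactly what converts the unitary orthogonality relation into this holomorphic form and accounts for the transposition of the indices in the second argument. Square-integrability of the discrete series is precisely what makes the product of the two matrix coefficients lie in $L^1(SU(1,1))$, guaranteeing convergence despite the noncompactness of $SU(1,1)$; this is the source of the ``no convergence issues'' remark in the introduction. Once $\delta_{ll'}$ is in hand, the constraint $l'+k'=l+k$ collapses to $k=k'$, yielding the full $\delta_{kk'}\delta_{ll'}\delta_{mm'}\delta_{nn'}$.

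The hard part will be pinning down the overall constant $-\tfrac1{2l+1}$, which requires three normalizations to line up: the constant $C$ in the measure decomposition (together with the $2{:}1$ nature of $(\theta,X)\mapsto e^{i\theta}X$ coming from $-1\in SU(1,1)$), the normalization of $d\mu$ implicit in $d_l$, and the prefactor $\tfrac{i}{2\pi^3}$. The sign is dictated by the formal degree: for the parameter range $l\le -1$ used here one has $d_l = -(2l+1) = |2l+1| > 0$, so $d_l^{-1} = -\tfrac1{2l+1}$. I would fix all constants simultaneously by evaluating both sides at one convenient choice of indices, where (\ref{int_t}) renders $\tau^l_{n\,\underline{m}}(Z)$ and $\tau^l_{m\,\underline{n}}(Z^{-1})$ explicit and the $SU(1,1)$-integral is computable directly; the already-established delta structure then propagates the value to all indices. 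The final linear-independence assertion is then immediate: a nontrivial relation among the $\tau^l_{n\,\underline{m}}(Z)\cdot N(Z)^k$ would pair to zero against every dual function $\tau^l_{m\,\underline{n}}(Z^{-1})\cdot N(Z)^{-k-2}$, whereas the orthogonality relation shows these pairings isolate each coefficient.
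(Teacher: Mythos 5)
The paper does not actually prove this proposition --- it is imported verbatim from Proposition 56 of \cite{FL2} --- so there is no in-text argument to compare against; your strategy (factoring $U(1,1)$ as $U(1)\cdot SU(1,1)$ modulo the $2{:}1$ overlap at $-1$, using homogeneity of degrees $2l$ and $2k$ to reduce the $\theta$-integral to the selection rule $l'+k'=l+k$, and then applying Schur orthogonality for the square-integrable discrete series coefficients on $SU(1,1)$, whose formal degree $-(2l+1)>0$ produces both the constant and the sign) is sound and is essentially the computation underlying the cited result. The only soft spots are minor: identifying the transverse factor of $dV$ restricted to $U(1,1)$ as Haar measure on $SU(1,1)$ requires not just the $U(1)$-scaling behavior you cite but also the bi-invariance of $dV$ under $Z\mapsto aZb$ with $a,b\in SU(1,1)$ (the determinant of this linear map is $N(a)^2N(b)^2=1$), and the normalization computation pinning down $-\tfrac1{2l+1}$ is deferred rather than carried out.
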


Since $\tau^l_{m\underline{n}}(Z^{-1}) = \tau^l_{m\underline{n}}(Z^+) \cdot N(Z)^{-2l}$
is proportional to $\tau^l_{-n\underline{-m}}(Z) \cdot N(Z)^{-2l}$,
the functions $\tau^l_{m\underline{n}}(Z^{-1}) \cdot N(Z)^{-k-2}$ indeed lie in
${\cal D}^h \oplus {\cal D}^a$. Hence the spaces ${\cal D}^h$ and ${\cal D}^a$
are dual to each other under the pairing (\ref{pairing}).

\section{Decomposition of ${\cal D}^h$ and ${\cal D}^a$ into
Irreducible Components}

In this section we discuss a split analogue of Theorem \ref{thm7}:

\begin{thm}  \label{decomposition-thm}
The spaces ${\cal D}^h$ and ${\cal D}^a$ are invariant under the
$\rho_1$-action of $\mathfrak{gl}(2,\HC)$. We have the following
decompositions into irreducible components:
$$
(\rho_1,{\cal D}^h) =
(\rho_1,{\cal D}^h_<) \oplus (\rho_1,{\cal D}^{--}) \oplus (\rho_1,{\cal D}^h_>)
\quad \text{and} \quad
(\rho_1,{\cal D}^a) =
(\rho_1,{\cal D}^a_<) \oplus (\rho_1,{\cal D}^{++}) \oplus (\rho_1,{\cal D}^a_>),
$$
where
\begin{align*}
{\cal D}^h_< &= \BB C \text{-span of }
\bigl\{ \tau^l_{n\,\underline{m}}(Z) \cdot N(Z)^k;\:
m, n \ge -l,\: k \le -1 \bigr\}, \\
{\cal D}^{--} &= \BB C \text{-span of }
\bigl\{ \tau^l_{n\,\underline{m}}(Z) \cdot N(Z)^k;\:
m, n \ge -l,\: 0 \le k \le -(2l+2) \bigr\}, \\
{\cal D}^h_> &= \BB C \text{-span of }
\bigl\{ \tau^l_{n\,\underline{m}}(Z) \cdot N(Z)^k;\:
m, n \ge -l,\: k \ge -(2l+1) \bigr\}, \\
{\cal D}^a_< &= \BB C \text{-span of }
\bigl\{ \tau^l_{n\,\underline{m}}(Z) \cdot N(Z)^k;\:
m, n \le l,\: k \le -1 \bigr\}, \\
{\cal D}^{++} &= \BB C \text{-span of }
\bigl\{ \tau^l_{n\,\underline{m}}(Z) \cdot N(Z)^k;\:
m, n \le l,\: 0 \le k \le -(2l+2) \bigr\}, \\
{\cal D}^a_> &= \BB C \text{-span of }
\bigl\{ \tau^l_{n\,\underline{m}}(Z) \cdot N(Z)^k;\:
m, n \le l,\: k \ge -(2l+1) \bigr\}
\end{align*}
(see Figure \ref{decomposition-fig2}).
\end{thm}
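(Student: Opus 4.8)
The plan is to mirror the proof of Theorem \ref{thm7} in \cite{FL4}, exploiting the fact that $\tau^l_{n\,\underline{m}}(Z)$ and $t^l_{n\,\underline{m}}(Z)$ are given by the \emph{same} integral (\ref{int_t}) and differ only in the admissible ranges of $l,m,n$. At the level of vector spaces the direct-sum statement is automatic: by Proposition 56 in \cite{FL2} the functions $\tau^l_{n\,\underline{m}}(Z)\cdot N(Z)^k$ are linearly independent, and the six index sets defining ${\cal D}^h_<,{\cal D}^{--},{\cal D}^h_>$ and ${\cal D}^a_<,{\cal D}^{++},{\cal D}^a_>$ partition the index set of ${\cal D}^h\oplus{\cal D}^a$. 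Hence the entire content of the theorem is (i) $\rho_1$-invariance of each listed subspace and (ii) irreducibility of each, and I will prove everything for ${\cal D}^h$ and deduce the ${\cal D}^a$ statements by duality at the end.

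The engine of the proof is a set of recursion relations expressing $\rho_1(X)\bigl(\tau^l_{n\,\underline{m}}(Z)\cdot N(Z)^k\bigr)$, for $X$ one of the four block-generators of Lemma \ref{rho-algebra-action}, as an explicit linear combination of basis elements $\tau^{l'}_{n'\,\underline{m'}}(Z)\cdot N(Z)^{k'}$. I would derive these exactly as in \cite{FL4}: substitute (\ref{int_t}) into the differential operators of Lemma \ref{rho-algebra-action}, differentiate under the integral sign, and integrate by parts in $s$. Because the integrand is identical to the $SU(2)$ case, the resulting structure constants are the \emph{same} functions of $(l,m,n,k)$ as in \cite{FL4}; only the ranges change. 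A homogeneity count (each $\tau^l_{n\,\underline{m}}(Z)\cdot N(Z)^k$ is homogeneous of degree $2l+2k$) shows that the diagonal generators $A,D$ fix the level $(l,k)$ while the off-diagonal generators $B,C$ produce the four transitions $(l,k)\mapsto(l+\tfrac12,k-1),\ (l-\tfrac12,k),\ (l+\tfrac12,k),\ (l-\tfrac12,k+1)$.

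For invariance I argue in two stages. That ${\cal D}^h$ and ${\cal D}^a$ do not mix follows from weight separation: the holomorphic range forces $m,n\ge -l\ge 1$ and the antiholomorphic range forces $m,n\le l\le -1$, so no admissible transition, which shifts the weights and $l$ by at most $\tfrac12$, can carry one range into the other; the relevant structure constant vanishes at the weight boundary $m=-l$ (respectively $m=l$), which is exactly the lowest- (highest-) weight condition of the discrete series. For the finer split I track $w:=k+2l$: the transition $(l,k)\mapsto(l-\tfrac12,k)$ lowers $w$ by $1$, $(l,k)\mapsto(l+\tfrac12,k)$ raises it by $1$, and the other two fix $w$. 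The two walls separating the three pieces then sit at $k=0$ (between $k\le -1$ and $k\ge 0$) and at $w=-1$ (between $w\le -2$ and $w\ge -1$) -- precisely the two walls of Theorem \ref{thm7} -- so invariance of ${\cal D}^h_<,{\cal D}^{--},{\cal D}^h_>$ reduces to the \emph{same} vanishing of structure constants already established in \cite{FL4}. The only genuinely new point is that for $\tau$ one has $l\le -1$, so these walls partition the half-plane $\{l\le -1\}$ differently than they partition $\{l\ge 0\}$, and I would check directly that they cut out the six sets in the statement.

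Irreducibility I prove blockwise. Since $N(Z)$ is invariant under the diagonal $SU(1,1)\times SU(1,1)$ of (\ref{SU(1,1)xSU(1,1)}), for fixed $(l,k)$ the span of $\{\tau^l_{n\,\underline{m}}(Z)\cdot N(Z)^k: m,n\ge -l\}$ is the matrix-coefficient space $V_l\boxtimes V_l^\ast$ of a single holomorphic discrete series, hence irreducible under the diagonal subalgebra; so any nonzero $\rho_1$-submodule of a piece contains an entire level $(l,k)$. It then suffices to prove that, inside each piece, the graph whose edges are the \emph{non-vanishing} off-diagonal transitions between levels is connected: suitable compositions of the $B$- and $C$-operators realize the level moves $(l,k)\mapsto(l,k\pm1)$ and $(l,k)\mapsto(l\pm\tfrac12,k)$, and block irreducibility promotes one level to all reachable levels. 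The main obstacle is exactly this interior connectivity: I must verify that within each of the new pieces the relevant structure constants vanish \emph{only} on the walls and on the outer edge $l=-1$ (where raising $l$ is simply unavailable, as the edge $l=0$ was in \cite{FL4}), so that no spurious internal wall disconnects a piece. Finally, the three statements for ${\cal D}^a$ follow from those for ${\cal D}^h$ without repeating the argument: the invariant pairing (\ref{pairing}) and the orthogonality relations (\ref{orthogonality}) identify ${\cal D}^a$ with the dual of ${\cal D}^h$ and match the pieces as ${\cal D}^h_<\leftrightarrow{\cal D}^a_>$, ${\cal D}^h_>\leftrightarrow{\cal D}^a_<$ and ${\cal D}^{--}\leftrightarrow{\cal D}^{++}$, so invariance and irreducibility transfer to the dual submodules.
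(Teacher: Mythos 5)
Your proposal is correct and follows essentially the same route as the paper's own (outlined) proof: both rest on the recursion relations of \cite{FL4} for the action of the off-diagonal generators on $f_l(Z)\cdot N(Z)^k$, the observation that each fixed level $(l,k)$ is irreducible under the diagonal $\mathfrak{su}(1,1)\times\mathfrak{su}(1,1)$, and the verification that the transition coefficients vanish exactly on the two walls $k\in\{-1,0\}$, $2l+k+1\in\{-1,0\}$ and at the edge $l=-1$. Your explicit treatment of the weight boundary $m=-l$ and the duality shortcut for ${\cal D}^a$ are minor presentational variations on the same argument.
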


\begin{figure}
\begin{center}
\setlength{\unitlength}{.7mm}
\begin{picture}(110,70)
\multiput(10,10)(10,0){10}{\circle*{1}}
\multiput(10,20)(10,0){10}{\circle*{1}}
\multiput(10,30)(10,0){10}{\circle*{1}}
\multiput(10,40)(10,0){10}{\circle*{1}}
\thicklines
\put(50,0){\vector(0,1){70}}
\put(0,60){\vector(1,0){110}}
\thinlines
\put(5,42){\line(1,0){35}}
\put(42,5){\line(0,1){35}}
\qbezier(40,42)(42,42)(42,40)

\put(48,5){\line(0,1){35}}
\put(51.4,41,4){\line(1,-1){36.4}}
\qbezier(48,40)(48,44.8)(51.4,41.4)

\put(60,42){\line(1,0){45}}
\put(58.6,38.6){\line(1,-1){33.6}}
\qbezier(60,42)(55.2,42)(58.6,38.6)

\put(52,67){$2l$}
\put(106,62){$k$}
\put(7,45){\small ${\cal D}^h_<$ and ${\cal D}^a_<$}
\put(70,45){\small ${\cal D}^h_>$ and ${\cal D}^a_>$}
\put(53,1){\small ${\cal D}^{--}$ and ${\cal D}^{++}$}
\end{picture}
\end{center}
\caption{Decompositions of $(\rho_1,{\cal D}^h)$ and $(\rho_1,{\cal D}^a)$
into irreducible components}
\label{decomposition-fig2}
\end{figure}
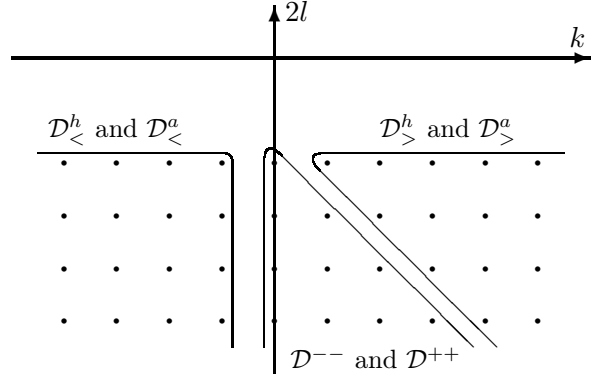





\begin{proof}
Since the proof is essentially the same as that of Theorem 7 in \cite{FL4},
we only give an outline.
Note that the basis elements of ${\cal D}^h$ and ${\cal D}^a$
consist of functions of the kind
$$
f_l(Z) \cdot N(Z)^k, \qquad \square f_l(Z)=0, \quad
l= -1, -\frac32, -2, -\frac52, \dots, \quad k \in\BB Z,
$$
where the functions $f_l(Z)$ range over a basis of harmonic functions which
are homogeneous of degree $2l$.
Recall that we consider $U(1,1) \times U(1,1)$ and $SU(1,1) \times SU(1,1)$
as subgroups of $GL(2,\HC)$ via (\ref{U(1,1)xU(1,1)}) and
(\ref{SU(1,1)xSU(1,1)}).
For $k$ and $l$ fixed, these functions span an irreducible representation
of the Lie algebra $\mathfrak{u}(1,1) \times \mathfrak{u}(1,1)$, which remains
irreducible when restricted to $\mathfrak{su}(1,1) \times \mathfrak{su}(1,1)$,
belongs to the holomorphic discrete series when $m,n \ge -l$
and the antiholomorphic discrete series when $m,n \le l$.

\begin{figure}
\begin{center}
\begin{subfigure}[b]{0.2\textwidth}
\centering
\includegraphics[scale=0.17]{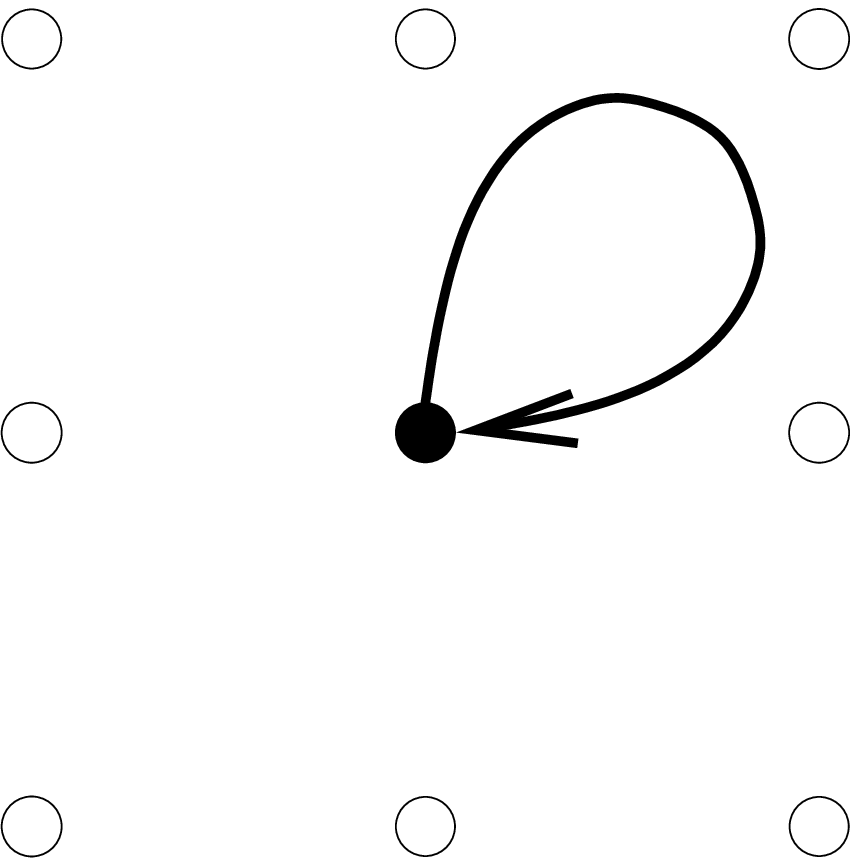}
\caption{\footnotesize Action of
$\rho_1\bigl(\begin{smallmatrix} A & 0 \\ 0 & 0 \end{smallmatrix}\bigr)$}
\end{subfigure}
\quad
\begin{subfigure}[b]{0.2\textwidth}
\centering
\includegraphics[scale=0.17]{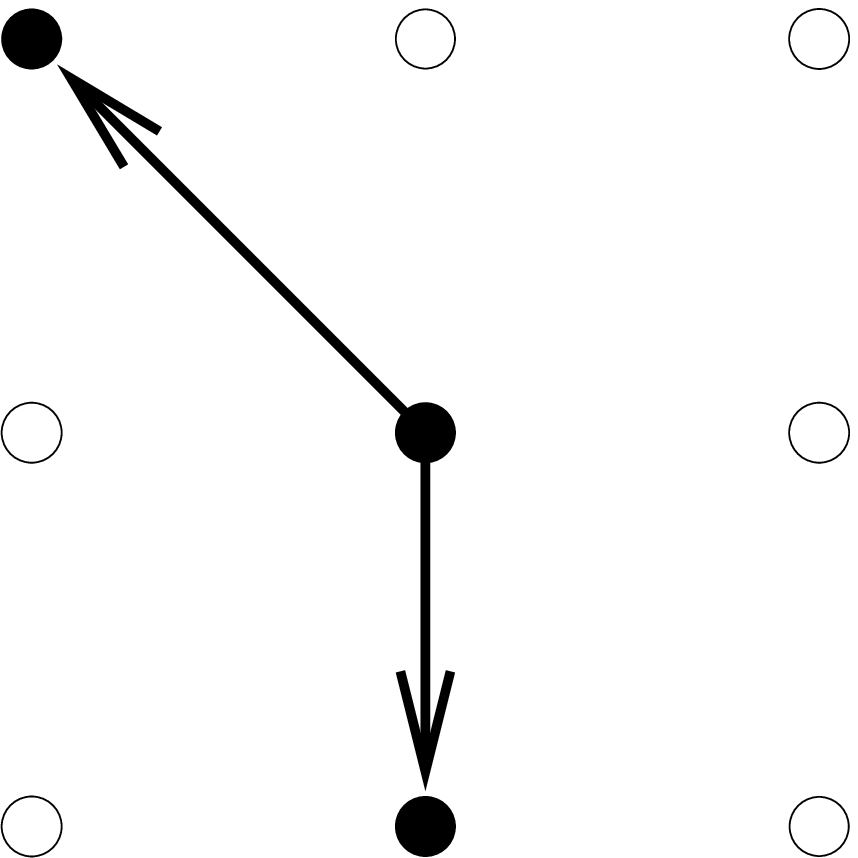}
\caption{\footnotesize Action of
$\rho_1\bigl(\begin{smallmatrix} 0 & B \\ 0 & 0 \end{smallmatrix}\bigr)$}
\end{subfigure}
\quad
\begin{subfigure}[b]{0.2\textwidth}
\centering
\includegraphics[scale=0.17]{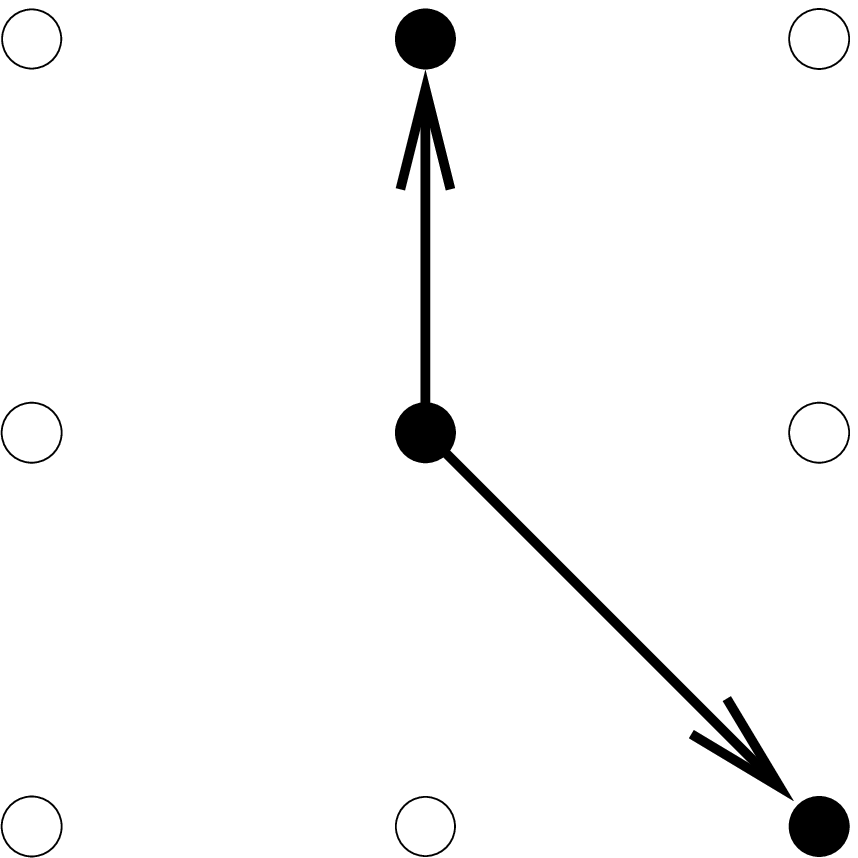}
\caption{\footnotesize Action of
$\rho_1\bigl(\begin{smallmatrix} 0 & 0 \\ C & 0 \end{smallmatrix}\bigr)$}
\end{subfigure}
\quad
\begin{subfigure}[b]{0.2\textwidth}
\centering
\includegraphics[scale=0.17]{AD.eps}
\caption{\footnotesize Action of
$\rho_1\bigl(\begin{smallmatrix} 0 & 0 \\ 0 & D \end{smallmatrix}\bigr)$}
\end{subfigure}
\end{center}
\caption{}
\label{actions}
\end{figure}

By Lemma \ref{rho-algebra-action}, matrices of the kind
$\bigl( \begin{smallmatrix} 0 & B \\ 0 & 0 \end{smallmatrix} \bigr)
\in \mathfrak{gl}(2,\HC)$ with $B \in \HC$ act by
$$
f \mapsto \tr \bigl( B \cdot (-\partial f ) \bigr),
$$
and, by equation (27) in \cite{FL4},
$$
\partial \bigl( f_l(Z) \cdot N(Z)^k \bigr)
= \frac{2l+k+1}{2l+1} \partial f_l \cdot N(Z)^k
+ \frac{k}{2l+1} \bigl( Z^+ \cdot (\partial^+ f_l) \cdot Z^+ + Z^+f_l \bigr)
\cdot N(Z)^{k-1}
$$
with $\partial f_l$ and $Z^+ \cdot (\partial^+ f_l) \cdot Z^+ + Z^+f_l$
being harmonic and having degrees $2l-1$ and $2l+1$ respectively.

Using Lemma \ref{rho-algebra-action} again, matrices of the kind
$\bigl( \begin{smallmatrix} 0 & 0 \\ C & 0 \end{smallmatrix} \bigr)
\in \mathfrak{gl}(2,\HC)$ with
$C \in \HC$ act by
$$
f \mapsto \tr \bigl( C \cdot \bigl(
Z \cdot (\partial f) \cdot Z +2Zf \bigr) \bigr)
= \tr \bigl( C \cdot \bigl(Z \cdot \partial (Zf) \bigr) \bigr),
$$
and, by equation (28) in \cite{FL4},

\begin{multline*}
Z \cdot \partial \bigl( f_l \cdot N(Z)^k \bigr) \cdot Z + 2 Zf_l \cdot N(Z)^k \\
= \frac{2l+k+2}{2l+1} \bigl( Z \cdot (\partial f_l) \cdot Z + Zf_l \bigr)
\cdot N(Z)^k + \frac{k+1}{2l+1} \partial^+ f_l \cdot N(Z)^{k+1}
\end{multline*}
with $Z \cdot (\partial f_l) \cdot Z + Zf_l$ and $\partial^+ f_l$
being harmonic and having degrees $2l+1$ and $2l-1$ respectively.

Since
$$
\partial \bigl( f(Z^{-1}) \bigr)
= - Z^{-1} \cdot (\partial f) \bigr|_{Z^{-1}} \cdot Z^{-1},
$$
it follows that
$$
Z \cdot (\partial f_l) \cdot Z + Zf_l =
- \operatorname{Inv} \circ \partial \circ \operatorname{Inv},
$$
where
$$
\operatorname{Inv}: \quad f \mapsto \frac1{N(Z)} \cdot f(Z^{-1}).
$$
Hence $Z \cdot (\partial f_l) \cdot Z + Zf_l$
and its conjugate $Z^+ \cdot (\partial^+ f_l) \cdot Z^+ + Z^+f_l$
become zero if and only if $l=-1$.

The actions of
$\bigl( \begin{smallmatrix} A & 0 \\ 0 & 0 \end{smallmatrix} \bigr)$,
$\bigl( \begin{smallmatrix} 0 & B \\ 0 & 0 \end{smallmatrix} \bigr)$,
$\bigl( \begin{smallmatrix} 0 & 0 \\ C & 0 \end{smallmatrix} \bigr)$ and
$\bigl( \begin{smallmatrix} 0 & 0 \\ 0 & D \end{smallmatrix} \bigr)$
are illustrated in Figure \ref{actions}. In the diagram describing
$\rho_1\bigl( \begin{smallmatrix} 0 & B \\ 0 & 0 \end{smallmatrix} \bigr)$
the vertical arrow disappears when $2l+k+1=0$
and the diagonal arrow disappears when $k=0$ or $l=-1$.
Similarly, in the diagram describing
$\rho_1\bigl( \begin{smallmatrix} 0 & 0 \\ C & 0 \end{smallmatrix} \bigr)$
the vertical arrow disappears when $l=-1$ or $2l+k+2=0$ and the
diagonal arrow disappears when $k=-1$.
This proves that the spaces ${\cal D}^h$, ${\cal D}^h_<$, ${\cal D}^{--}$,
${\cal D}^h_>$, ${\cal D}^a$, ${\cal D}^a_<$, ${\cal D}^{++}$ and ${\cal D}^a_>$
are $\mathfrak{gl}(2,\HC)$-invariant.
This also proves that ${\cal D}^h_<$, ${\cal D}^{--}$ and ${\cal D}^h_>$
are irreducible $\mathfrak{gl}(2,\HC)$-invariant subspaces of ${\cal D}^h$
and that ${\cal D}^a_<$, ${\cal D}^{++}$ and ${\cal D}^a_>$ are irreducible
$\mathfrak{gl}(2,\HC)$-invariant subspaces of ${\cal D}^a$.
\end{proof}

\begin{rem}
Theorem 53 in \cite{FL2} gives explicit isomorphisms of
$\mathfrak{gl}(2,\HC)$-modules
$$
(\rho_1,{\cal D}^{--}) \simeq (\rho_1, \Zh^-) \qquad \text{and} \qquad
(\rho_1,{\cal D}^{++}) \simeq (\rho_1, \Zh^+).
$$

One immediately notices the similarity of Figures \ref{decomposition-fig}
and \ref{decomposition-fig2} illustrating the decompositions of Theorems
\ref{thm7} and \ref{decomposition-thm} respectively.
The figures look similar because the matrix coefficients
$t^l_{n\,\underline{m}}(Z)$'s of $SU(2)$ and $\tau^l_{n\,\underline{m}}(Z)$'s of
$SU(1,1)$ span irreducible representations of the diagonal subalgebras for
$l$ fixed, are homogeneous of degree $2l$, except $l \ge 0$ in the case of
$SU(2)$-coefficients and $l \le -1$ in the case of $SU(1,1)$-coefficients.
The ``border conditions'' separating irreducible components $k \ge 0$ or
$k \le -1$ and $k \ge -(2l+1)$ or $k \le -(2l+2)$ are formally the same
in the cases of $\Zh$, ${\cal D}^h$ and ${\cal D}^a$.
\end{rem}

\section{Reproducing Kernels}

In this section we obtain the reproducing kernels for the irreducible
components of ${\cal D}^h$ and ${\cal D}^a$.
Recall that $\Gamma^-$ and $\Gamma^+$ are certain open Ol'shanskii semigroups
lying inside $GL(2,\BB C) \subset \HC$ which were described in
Subsection 3.3 in \cite{FL2}. For our purposes they can be defined as
$$
\Gamma^- =
\biggl\{ g \cdot \begin{pmatrix} \lambda_1 & 0 \\ 0 & \lambda_2 \end{pmatrix}
\cdot g^{-1} \in GL(2,\BB C);\: g \in SU(1,1),\:
\lambda_1, \lambda_2 \in \BB C,\: |\lambda_1| > 1 > |\lambda_2| > 0 \biggr\},
$$
$$
\Gamma^+ =
\biggl\{ g \cdot \begin{pmatrix} \lambda_1 & 0 \\ 0 & \lambda_2 \end{pmatrix}
\cdot g^{-1} \in GL(2,\BB C);\: g \in SU(1,1),\:
\lambda_1, \lambda_2 \in \BB C,\: |\lambda_2| > 1 > |\lambda_1| > 0 \biggr\}.
$$

We already have the reproducing kernels for ${\cal D}^{--}$ and ${\cal D}^{++}$:

\begin{prop}[Proposition 54 in \cite{FL2}]
We have the following matrix coefficient expansions for the reproducing kernels
of ${\cal D}^{--}$ and ${\cal D}^{++}$ respectively:
$$
\frac1{N(Z-W)^2} =
\sum_{\text{\tiny $\begin{matrix} k,l,m,n \\ m,n \ge -l \ge 1 \\ 0 \le k \le -2l-2 \end{matrix}$}}
-(2l+1) \tau^l_{n\underline{m}}(W) \cdot N(W)^k \cdot
\tau^l_{m\underline{n}}(Z^{-1}) \cdot N(Z)^{-k-2}
$$
which converges pointwise absolutely whenever $WZ^{-1} \in \Gamma^-$ and
$$
\frac1{N(Z-W)^2} =
\sum_{\text{\tiny $\begin{matrix} k,l,m,n \\ m,n \le l \le -1 \\ 0 \le k \le -2l-2 \end{matrix}$}}
-(2l+1) \tau^l_{n\underline{m}}(W) \cdot N(W)^k \cdot
\tau^l_{m\underline{n}}(Z^{-1}) \cdot N(Z)^{-k-2}
$$
which converges pointwise absolutely whenever $WZ^{-1} \in \Gamma^+$.
The sums are taken first over all $m$, $n$, then over $k=0,1,2,\dots, -(2l+2)$,
and then over $l=-1,-\frac32, -2, -\frac52,\dots$.
\end{prop}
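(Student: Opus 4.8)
The plan is to prove the identity by summing the right-hand side in closed form and recognizing the result as $\frac1{N(Z-W)^2}$. First I would invoke the multiplicativity of the norm, $N(Z-W)=N(Z)\cdot N(1-WZ^{-1})$, so that both sides carry a common scalar factor $N(Z)^{-2}$ and the claim reduces to a class-function identity in $A=WZ^{-1}$, whose eigenvalues $\mu_1,\mu_2$ satisfy $\mu_1\mu_2=N(W)/N(Z)$ and $(1-\mu_1)(1-\mu_2)=N(1-WZ^{-1})$. The key structural simplification is to perform the summation over $m$ and $n$ first: by the homomorphism (matrix-multiplication) property of the matrix coefficients,
$$
\sum_{m,n}\tau^l_{n\,\underline{m}}(W)\,\tau^l_{m\,\underline{n}}(Z^{-1})
=\sum_{n}\tau^l_{n\,\underline{n}}(WZ^{-1})=\chi_l(WZ^{-1}),
$$
the character of the discrete series indexed by $l$. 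This collapses the quadruple sum to a double sum over $k$ and $l$ of characters weighted by powers of $N(W)$ and $N(Z)$.

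Next I would evaluate $\chi_l$ on a diagonal representative. Substituting $Z=\operatorname{diag}(\mu_1,\mu_2)$ into the contour integral (\ref{int_t}) gives $\tau^l_{n\,\underline{m}}(\operatorname{diag}(\mu_1,\mu_2))=\delta_{mn}\,\mu_1^{\,l-m}\mu_2^{\,l+m}$, so for the holomorphic series ($m,n\ge-l$)
$$
\chi_l(\operatorname{diag}(\mu_1,\mu_2))=\sum_{n\ge-l}\mu_1^{\,l-n}\mu_2^{\,l+n}=\frac{\mu_1^{\,2l+1}}{\mu_1-\mu_2},
$$
the geometric series converging precisely when $|\mu_2|<|\mu_1|$. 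With this, the finite geometric sum over $k=0,\dots,-2l-2$ produces the factor $(t^{-2l-1}-1)/(t-1)$ with $t=\mu_1\mu_2$, and the identity $\mu_1^{2l+1}t^{-2l-1}=\mu_2^{-2l-1}$ turns the remaining $l$-sum into $\sum_{p\ge1}p\,(\mu_2^{\,p}-\mu_1^{-p})$, where $p=-(2l+1)\in\{1,2,3,\dots\}$. These two series sum to $\frac{\mu_2}{(1-\mu_2)^2}$ and $\frac{\mu_1}{(1-\mu_1)^2}$, and an elementary factorization of $\frac{\mu_2}{(1-\mu_2)^2}-\frac{\mu_1}{(1-\mu_1)^2}$ over the common denominator $(1-\mu_1)^2(1-\mu_2)^2$ cancels against the prefactor $\frac1{(\mu_1-\mu_2)(\mu_1\mu_2-1)}$, leaving the reduced sum equal to $\frac1{(1-\mu_1)^2(1-\mu_2)^2}$. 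Restoring the factor $N(Z)^{-2}$ and using $N(Z-W)=N(Z)(1-\mu_1)(1-\mu_2)$ yields $\frac1{N(Z-W)^2}$, as required. The antiholomorphic case for ${\cal D}^{++}$ is identical after interchanging the roles of $\mu_1$ and $\mu_2$, which is why the convergence region becomes $\Gamma^+$.

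As an a priori consistency check on the constant $-(2l+1)$, one can invoke the orthogonality relations (\ref{orthogonality}): they exhibit $\{\tau^l_{n\,\underline{m}}(W)\,N(W)^k\}$ and $\{-(2l+1)\tau^l_{m\,\underline{n}}(Z^{-1})\,N(Z)^{-k-2}\}$ as biorthogonal families, so the series above is forced to be the reproducing kernel of ${\cal D}^{--}$; the computation above merely confirms that this kernel is the closed form $\frac1{N(Z-W)^2}$.

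The main obstacle is convergence and the justification for interchanging and reordering the summations. Each analytic step imposes an eigenvalue constraint — the character series needs $|\mu_2|<|\mu_1|$, while the two halves of the $l$-sum need $|\mu_2|<1$ and $|\mu_1|>1$ respectively — and these constraints together are exactly the condition $|\lambda_1|>1>|\lambda_2|$ on the eigenvalues of $WZ^{-1}$ that defines $\Gamma^-$ (respectively $|\lambda_2|>1>|\lambda_1|$ for $\Gamma^+$). I would therefore first establish \emph{absolute} convergence on $\Gamma^-$ by dominating $\tau^l_{n\,\underline{m}}(W)\,\tau^l_{m\,\underline{n}}(Z^{-1})$ in terms of the eigenvalue ratio $|\lambda_2/\lambda_1|$; this both legitimizes the term-by-term summation in the prescribed order (over $m,n$, then $k$, then $l$) and pins down the region of pointwise absolute convergence asserted in the statement.
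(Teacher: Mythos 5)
The paper itself does not prove this proposition --- it is imported verbatim as Proposition 54 of \cite{FL2} --- but your argument is correct and follows exactly the strategy the paper uses for the companion result, Proposition \ref{kernel-prop}: collapse the $m,n$-sum to a character via the multiplicativity of the matrix coefficients, evaluate on a diagonal representative of $WZ^{-1}$, and sum the resulting geometric series in $k$ and $l$. The closed-form algebra checks out (the factorization $\mu_2(1-\mu_1)^2-\mu_1(1-\mu_2)^2=(\mu_2-\mu_1)(1-\mu_1\mu_2)$ cancels the prefactor $(\mu_1-\mu_2)^{-1}(\mu_1\mu_2-1)^{-1}$ and leaves $N(Z)^{-2}(1-\mu_1)^{-2}(1-\mu_2)^{-2}=N(Z-W)^{-2}$), and your identification of the three convergence constraints $|\mu_2|<|\mu_1|$, $|\mu_2|<1$, $|\mu_1|>1$ with the defining inequalities of $\Gamma^-$ is the right justification for the stated region of absolute convergence.
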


It remains to find the reproducing kernels for the other four irreducible
components.

\begin{prop}  \label{kernel-prop}
Suppose that $WZ^{-1}$ is $SU(1,1)$-conjugate to a diagonal matrix
$\bigl(\begin{smallmatrix} \lambda_1 & 0 \\
0 & \lambda_2 \end{smallmatrix}\bigr)$.
We have the following matrix coefficient expansions for the reproducing kernels
of ${\cal D}^h_<$, ${\cal D}^h_>$, ${\cal D}^a_<$ and ${\cal D}^a_>$
respectively:
\begin{enumerate}
\item  \label{1}
When $WZ^{-1} \in \Gamma^-$ and $|N(WZ^{-1})|>1$ we have
a pointwise absolutely convergent series
$$
\sum_{\text{\tiny $\begin{matrix} k,l,m,n \\ m,n \ge -l \ge 1 \\ k \le -1 \end{matrix}$}}
-(2l+1) \tau^l_{n\underline{m}}(W) \cdot N(W)^k \cdot
\tau^l_{m\underline{n}}(Z^{-1}) \cdot N(Z)^{-k-2}
= \frac{- N(Z)^{-2} \cdot \lambda_1}
{(\lambda_1-\lambda_2)(1-\lambda_1 \lambda_2)(1-\lambda_1)^2};
$$
\item
When $WZ^{-1} \in \Gamma^-$ and $|N(WZ^{-1})|<1$ we have
a pointwise absolutely convergent series
$$
\sum_{\text{\tiny $\begin{matrix} k,l,m,n \\ m,n \ge -l \ge 1 \\ k \ge -(2l+1) \end{matrix}$}}
-(2l+1) \tau^l_{n\underline{m}}(W) \cdot N(W)^k \cdot
\tau^l_{m\underline{n}}(Z^{-1}) \cdot N(Z)^{-k-2}
= \frac{- N(Z)^{-2} \cdot \lambda_2}
{(\lambda_1-\lambda_2)(1-\lambda_1 \lambda_2)(1-\lambda_2)^2};
$$
\item
When $WZ^{-1} \in \Gamma^+$ and $|N(WZ^{-1})|>1$ we have
a pointwise absolutely convergent series
$$
\sum_{\text{\tiny $\begin{matrix} k,l,m,n \\ m,n \le l \le -1 \\ k \le -1 \end{matrix}$}}
-(2l+1) \tau^l_{n\underline{m}}(W) \cdot N(W)^k \cdot
\tau^l_{m\underline{n}}(Z^{-1}) \cdot N(Z)^{-k-2}
= \frac{N(Z)^{-2} \cdot \lambda_2}
{(\lambda_1-\lambda_2)(1-\lambda_1 \lambda_2)(1-\lambda_2)^2};
$$
\item
When $WZ^{-1} \in \Gamma^+$ and $|N(WZ^{-1})|<1$ we have
a pointwise absolutely convergent series
$$
\sum_{\text{\tiny $\begin{matrix} k,l,m,n \\ m,n \le l \le -1 \\ k \ge -(2l+1) \end{matrix}$}}
-(2l+1) \tau^l_{n\underline{m}}(W) \cdot N(W)^k \cdot
\tau^l_{m\underline{n}}(Z^{-1}) \cdot N(Z)^{-k-2}
= \frac{N(Z)^{-2} \cdot \lambda_1}
{(\lambda_1-\lambda_2)(1-\lambda_1 \lambda_2)(1-\lambda_1)^2}.
$$
\end{enumerate}
The sums are taken first over all $m$, $n$, then over $k$,
and then over $l=-1,-\frac32, -2, -\frac52,\dots$.
\end{prop}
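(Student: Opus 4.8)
The plan is to collapse the double index $m,n$ in each series into a single character of a discrete series representation and then evaluate two nested geometric series. The essential input is the addition (composition) formula $\sum_m \tau^l_{n\underline m}(W)\cdot\tau^l_{m\underline{n'}}(Z^{-1}) = \tau^l_{n\underline{n'}}(WZ^{-1})$, which is just matrix multiplication for the representation matrices $[\pi^l(g)]_{ab}=\tau^l_{a\underline b}(g)$ together with the group law $\pi^l(W)\pi^l(Z^{-1})=\pi^l(WZ^{-1})$; it extends from $SU(1,1)$ to all of $\HC^{\times}$ because each $\tau^l_{a\underline b}$ is homogeneous of degree $2l$, so the degrees in $W$ and in $Z$ match on both sides. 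Setting $n'=n$ and summing over $n$ gives $\sum_{m,n}\tau^l_{n\underline m}(W)\tau^l_{m\underline n}(Z^{-1}) = \tr \pi^l(WZ^{-1})=\chi_l(WZ^{-1})$, the character of $\pi^l$, which is invariant under $SU(1,1)$-conjugation. Since by hypothesis $WZ^{-1}$ is conjugate to $\operatorname{diag}(\lambda_1,\lambda_2)$, I may compute $\chi_l$ there.

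First I would evaluate the matrix coefficients on a diagonal matrix straight from the contour integral (\ref{int_t}): for $Z=\operatorname{diag}(\lambda_1,\lambda_2)$ the integrand reduces to $\lambda_1^{l-m}\lambda_2^{l+m}s^{\,n-m}\,ds/s$, whence $\tau^l_{n\underline m}(\operatorname{diag}(\lambda_1,\lambda_2))=\delta_{mn}\,\lambda_1^{l-n}\lambda_2^{l+n}$. Summing the diagonal entries over the holomorphic range $n\ge -l$ gives the geometric series $\chi_l=\sum_{n\ge -l}\lambda_1^{l-n}\lambda_2^{l+n}=\lambda_1^{2l+1}/(\lambda_1-\lambda_2)$, convergent exactly when $|\lambda_2|<|\lambda_1|$, i.e. in $\Gamma^-$; over the antiholomorphic range $n\le l$ one gets $\chi_l=\lambda_2^{2l+1}/(\lambda_2-\lambda_1)$, convergent in $\Gamma^+$. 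I would then separate the remaining $k$-dependence using $N(W)^k N(Z)^{-k-2}=N(Z)^{-2}N(WZ^{-1})^k=N(Z)^{-2}(\lambda_1\lambda_2)^k$, so that after the $m,n$-summation every reproducing-kernel series becomes $N(Z)^{-2}\sum_l\bigl(-(2l+1)\bigr)\chi_l\sum_k(\lambda_1\lambda_2)^k$.

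It then remains to sum the two geometric series in the prescribed order ($k$ first, then $l$). For the components ${\cal D}^h_<$, ${\cal D}^a_<$ (range $k\le -1$) the inner sum $\sum_{k\le -1}(\lambda_1\lambda_2)^k=(\lambda_1\lambda_2-1)^{-1}$ converges precisely when $|N(WZ^{-1})|=|\lambda_1\lambda_2|>1$, whereas for ${\cal D}^h_>$, ${\cal D}^a_>$ (range $k\ge -(2l+1)$) the inner sum $\sum_{k\ge -(2l+1)}(\lambda_1\lambda_2)^k=(\lambda_1\lambda_2)^{-2l-1}/(1-\lambda_1\lambda_2)$ converges precisely when $|\lambda_1\lambda_2|<1$; this is the origin of the case split $|N(WZ^{-1})|\gtrless 1$. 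In the ``$>$'' cases the factor $(\lambda_1\lambda_2)^{-2l-1}$ cancels against $\lambda_1^{2l+1}$ (resp. $\lambda_2^{2l+1}$) from the character, leaving a pure power of $\lambda_2$ (resp. $\lambda_1$); in all cases the outer sum has the shape $\sum_{l\le -1}\bigl(-(2l+1)\bigr)x^{-2l-1}=\sum_{p\ge 1}p\,x^{p}=x/(1-x)^2$, with $x$ equal to $\lambda_1^{-1}$, $\lambda_2$, $\lambda_2^{-1}$ or $\lambda_1$ as appropriate, each convergent by the residual $\Gamma^{\mp}$ inequality $|\lambda_1|<1$ or $|\lambda_2|<1$. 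Substituting and simplifying then produces the four closed forms.

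The work is not in any single step but in keeping the three convergence regions compatible so that the termwise operations are legitimate: the character series requires the $\Gamma^{\pm}$ inequality $|\lambda_2|<|\lambda_1|$ (resp. $|\lambda_1|<|\lambda_2|$), the $k$-series requires $|\lambda_1\lambda_2|\gtrless 1$, and the $l$-series requires the appropriate $|\lambda_i|<1$; only the prescribed summation order keeps all three simultaneously valid, with absolute convergence throughout (so that no regularization is needed, in contrast to $\Zh^0$). To fix the overall signs I would use the identity that, as rational functions of $\lambda_1\lambda_2$, the three $k$-ranges satisfy $\sum_{k\le -1}+\sum_{0\le k\le -2l-2}+\sum_{k\ge -(2l+1)}=0$; hence the ${\cal D}^h_<$ and ${\cal D}^h_>$ kernels must add up to $-1/N(Z-W)^2$, minus the ${\cal D}^{--}$ kernel of the preceding proposition (and similarly in the antiholomorphic case), which provides an independent check of the normalization and of the coefficient $-(2l+1)$ dictated by the orthogonality relation (\ref{orthogonality}).
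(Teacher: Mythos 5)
Your argument is essentially the paper's own proof of part \ref{1}: the paper likewise collapses the $m$-sum by multiplicativity of the matrix coefficients and identifies the $n$-sum with the discrete series character $\tilde\Theta_l^{\mp}$ evaluated on $\operatorname{diag}(\lambda_1,\lambda_2)$ (citing equations (33) and (34) of \cite{FL2}, which your direct residue computation of $\tau^l_{n\underline{m}}(\operatorname{diag}(\lambda_1,\lambda_2))=\delta_{mn}\lambda_1^{l-n}\lambda_2^{l+n}$ from (\ref{int_t}) merely makes explicit), and then sums the geometric series in $k$ and in $2l+1$ in exactly the order and with exactly the convergence bookkeeping you describe. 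One caution: your proposed cross-check (that the three $k$-ranges sum to zero as rational functions, so the ${\cal D}^h_<$ and ${\cal D}^h_>$ kernels must add up to $-1/N(Z-W)^2$) is worth actually carrying out, because it shows that the method yields $+N(Z)^{-2}\lambda_2\bigl((\lambda_1-\lambda_2)(1-\lambda_1\lambda_2)(1-\lambda_2)^2\bigr)^{-1}$ in part 2 and $-N(Z)^{-2}\lambda_1\bigl((\lambda_1-\lambda_2)(1-\lambda_1\lambda_2)(1-\lambda_1)^2\bigr)^{-1}$ in part 4 --- the opposite of the printed signs --- while agreeing with parts 1 and 3 and with $1/N(Z-W)^2$; so do not expect ``substituting and simplifying'' to reproduce the printed right-hand sides of parts 2 and 4 verbatim, as those appear to carry a sign typo traceable to $\sum_{k\ge -(2l+1)}x^k=x^{-2l-1}/(1-x)$ versus $\sum_{k\le -1}x^k=-1/(1-x)$.
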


\begin{proof}
We will prove part {\em \ref{1}} only, the other parts can be proved
in the same way. The proof is similar to that of Proposition 54 in \cite{FL2}.
Since $WZ^{-1} \in \Gamma^-$, $WZ^{-1}$ is $SU(1,1)$-conjugate to a diagonal
matrix $\bigl(\begin{smallmatrix} \lambda_1 & 0 \\ 0 & \lambda_2
\end{smallmatrix}\bigr)$ for some $\lambda_1, \lambda_2 \in \BB C$ with
$|\lambda_1| > 1 > |\lambda_2| > 0$.
Using equations (33) and (34) from \cite{FL2} and
letting the indices $k$, $l$, $m$, $n$ run over
$$
m,n = -l, -l+1, -l+2, \dots, \qquad k = -1, -2, -3, \dots, \qquad
l = -1,-\frac32, -2, -\frac52,\dots,
$$
we obtain:
\begin{multline*}
\sum_{k,l,m,n} (2l+1) \tau^l_{n\underline{m}}(W) \cdot N(W)^k \cdot
\tau^l_{m\underline{n}}(Z^{-1}) \cdot N(Z)^{-k-2}  \\
= \sum_{k,l,n} \frac{2l+1}{N(Z)^2} \cdot \tau^l_{n\underline{n}}(WZ^{-1}) \cdot
N(WZ^{-1})^k
= \sum_{k,l} \frac{2l+1}{N(Z)^2} \cdot
\tilde\Theta_l^-(WZ^{-1}) \cdot N(WZ^{-1})^k  \\
= \sum_{k,l} \frac{2l+1}{N(Z)^2} \cdot
\frac{\lambda_1^{2l+1} \cdot (\lambda_1 \lambda_2)^k}{\lambda_1-\lambda_2}
= \sum_{l \le -1} \frac{(2l+1) N(Z)^{-2} \cdot \lambda_1^{2l+1}}
{(\lambda_1-\lambda_2)(\lambda_1 \lambda_2 -1)}  \\
= \frac{N(Z)^{-2} \cdot \lambda_1}
{(\lambda_1-\lambda_2)(1-\lambda_1 \lambda_2)(1-\lambda_1)^2}.
\end{multline*}
\end{proof}

From these expansions of reproducing kernels and orthogonality relations
(\ref{orthogonality}) we immediately obtain the
following extension of Theorem 57 from \cite{FL2}, which can be regarded as
a split analogue of Corollary 14 and Theorem 15 in \cite{FL4}.

\begin{thm}  \label{projector-thm}
Let $R>0$. The integral operators
$$
f \quad \mapsto \quad
\frac{i}{2\pi^3} \int_{Z \in R \cdot U(1,1)} k(Z,W) \cdot f(Z) \,dV,
\qquad f \in {\cal D}^h \oplus {\cal D}^a,
$$
provide the $\mathfrak{gl}(2,\HC)$-equivariant projectors onto the
irreducible components of ${\cal D}^h \oplus {\cal D}^a$ as follows:
\begin{enumerate}
\item
If $k(Z,W)= N(Z-W)^{-2}$ and $W \in R \cdot \Gamma^-$, this operator provides
the projector onto ${\cal D}^{--}$;
\item
If $k(Z,W)= N(Z-W)^{-2}$ and $W \in R \cdot \Gamma^+$, this operator provides
the projector onto ${\cal D}^{++}$;
\item
If $k(Z,W)= \frac{- N(Z)^{-2} \cdot \lambda_1}
{(\lambda_1-\lambda_2)(1-\lambda_1 \lambda_2)(1-\lambda_1)^2}$
and $W \in R \cdot \Gamma^-$, $|N(W)|>1$,
this operator provides the projector onto ${\cal D}^h_<$;
\item
If $k(Z,W)= \frac{-N(Z)^{-2} \cdot \lambda_2}
{(\lambda_1-\lambda_2)(1-\lambda_1 \lambda_2)(1-\lambda_2)^2}$
and $W \in R \cdot \Gamma^-$, $|N(W)|<1$,
this operator provides the projector onto ${\cal D}^h_>$;
\item
If $k(Z,W)= \frac{N(Z)^{-2} \cdot \lambda_2}
{(\lambda_1-\lambda_2)(1-\lambda_1 \lambda_2)(1-\lambda_2)^2}$
and $W \in R \cdot \Gamma^+$, $|N(W)|>1$,
this operator provides the projector onto ${\cal D}^a_<$;
\item
If $k(Z,W)= \frac{N(Z)^{-2} \cdot \lambda_1}
{(\lambda_1-\lambda_2)(1-\lambda_1 \lambda_2)(1-\lambda_1)^2}$
and $W \in R \cdot \Gamma^+$, $|N(W)|<1$,
this operator provides the projector onto ${\cal D}^a_>$;
\end{enumerate}
where $WZ^{-1}$ is $SU(1,1)$-conjugate to $\bigl(\begin{smallmatrix}
\lambda_1 & 0 \\ 0 & \lambda_2 \end{smallmatrix}\bigr)$.
\end{thm}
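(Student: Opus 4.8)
The plan is to reduce all six assertions to the orthogonality relations (\ref{orthogonality}) by substituting the matrix-coefficient expansion of each kernel and integrating term by term. I would prove one representative new case in full, say part~3 (the projector onto ${\cal D}^h_<$), which uses the expansion of Proposition~\ref{kernel-prop}(\ref{1}); parts~4--6 are word-for-word the same after replacing that expansion by the corresponding one, and parts~1--2 are handled identically using the expansion of $N(Z-W)^{-2}$ from Proposition~54 of \cite{FL2}, so that these two cases essentially restate Theorem~57 of \cite{FL2}.

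First I would remove the scale $R$ from the computation. A basis element $\tau^{l'}_{n'\underline{m'}}(Z)\cdot N(Z)^{k'}$ is homogeneous of degree $2l'+2k'$ and a dual element $\tau^l_{m\underline n}(Z^{-1})\cdot N(Z)^{-k-2}$ is homogeneous of degree $-2l-2k-4$, while $dV$ scales by $R^4$ under $Z\mapsto RZ$. On the diagonal $l=l'$, $k=k'$ the product therefore has degree $-4$, so the integrand $f_1 f_2\,dV$ is scale-invariant and every pairing in (\ref{orthogonality}) is unchanged when $U(1,1)$ is replaced by $R\cdot U(1,1)$; off the diagonal the integral over $U(1,1)$ is already zero and stays zero. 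Hence the orthogonality relations hold verbatim over $R\cdot U(1,1)$ for every $R>0$.

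Next, for part~3 I would take $k(Z,W)$ to be the closed form of Proposition~\ref{kernel-prop}(\ref{1}), i.e.\ the left-hand series there. The hypotheses $W\in R\cdot\Gamma^-$, $|N(W)|>1$ are exactly those guaranteeing, via the semigroup geometry of $\Gamma^-$ recorded in \cite{FL2}, that $WZ^{-1}\in\Gamma^-$ with $|N(WZ^{-1})|>1$ for \emph{every} $Z\in R\cdot U(1,1)$; thus the expansion converges pointwise absolutely along the cycle and may be integrated term by term against any $f\in{\cal D}^h\oplus{\cal D}^a$. Writing $f=\tau^{l'}_{n'\underline{m'}}(Z)\cdot N(Z)^{k'}$ and using symmetry of the pairing, each term contributes
\begin{multline*}
\frac{i}{2\pi^3}\int_{R\cdot U(1,1)}\tau^l_{m\underline n}(Z^{-1})\cdot N(Z)^{-k-2}\cdot f(Z)\,dV \\
=\bigl\langle f,\ \tau^l_{m\underline n}(Z^{-1})\cdot N(Z)^{-k-2}\bigr\rangle_1
=-\tfrac1{2l+1}\delta_{kk'}\delta_{ll'}\delta_{mm'}\delta_{nn'}.
\end{multline*}
The prefactor $-(2l+1)$ cancels the $-\tfrac1{2l+1}$, so the operator sends $f$ to $\tau^{l'}_{n'\underline{m'}}(W)\cdot N(W)^{k'}=f(W)$ whenever the indices satisfy $m',n'\ge -l'\ge1$ and $k'\le-1$ (that is, whenever $f\in{\cal D}^h_<$), and to $0$ for every other basis element. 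Thus the operator is the identity on ${\cal D}^h_<$ and annihilates the remaining five basis-spanned summands, i.e.\ it is the projection onto ${\cal D}^h_<$ along their direct sum.

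Finally, equivariance is automatic. By Theorem~\ref{decomposition-thm} all six summands are $\mathfrak{gl}(2,\HC)$-submodules, and a linear map that restricts to the identity on one submodule and to zero on a complementary submodule commutes with the action; this may also be read off directly from the $\mathfrak{gl}(2,\HC)$-invariance of the pairing (\ref{pairing}). The one step that genuinely needs care is the interchange of the infinite matrix-coefficient sum with the integral: the cycle $R\cdot U(1,1)$ is non-compact (as $SU(1,1)$ is), so the interchange must be underwritten by the absolute convergence asserted in Proposition~\ref{kernel-prop} together with the fact that the whole cycle lies in the region $\{WZ^{-1}\in\Gamma^-,\ |N(WZ^{-1})|>1\}$. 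Everything after that interchange is the Kronecker-delta bookkeeping above.
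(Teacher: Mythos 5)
Your proposal is correct and follows exactly the route the paper intends: the paper gives no written proof, stating only that the theorem follows ``immediately'' from the reproducing-kernel expansions of Proposition \ref{kernel-prop} (and Proposition 54 of \cite{FL2}) together with the orthogonality relations (\ref{orthogonality}), which is precisely the term-by-term integration and Kronecker-delta bookkeeping you carry out. Your added remarks on rescaling to $R\cdot U(1,1)$ by homogeneity and on justifying the interchange of sum and integral via the stated absolute convergence are exactly the details the paper leaves implicit.
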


\separate

\separate

\noindent
{\em Department of Mathematics, Indiana University,
Rawles Hall, 831 East 3rd St, Bloomington, IN 47405}

\end{document}